\renewcommand\@biblabel[1]{}
\newtheorem{theorem}{Theorem}
\newtheorem{lemma}[theorem]{Lemma}
\theoremstyle{definition}
\newtheorem{definition}[theorem]{Definitions}
\newtheorem{notation}[theorem]{Notation}
\newtheorem{remark}[theorem]{Remark}
\numberwithin{theorem}{section}
\newcommand{\abs}[1]{\lvert#1\rvert}
\title[Balls Isoperimetric in $\mathbb{R}^n$ with Volume and Perimeter Densities $r^m$ and $r^k$]{Balls Isoperimetric in $\mathbb{R}^n$ with Volume and Perimeter Densities $r^m$ and $r^k$}
\author[\tiny{L. Di Giosia,  J. Habib,  L. Kenigsberg,  D. Pittman, W. Zhu}]{Leonardo Di Giosia,  Jahangir Habib,  Lea Kenigsberg,  Dylanger Pittman, Weitao Zhu\vspace{-5ex}}
\newlength\tindent
\begin{document}
\begingroup
\def\uppercasenonmath#1{}
\let\MakeUppercase\relax 
\maketitle
\endgroup

\begin{abstract}

We have discovered a "little" gap in our proof of the sharp conjecture that in $\mathbb{R}^n$ with volume and perimeter densities $r^m$ and $r^k$, balls about the origin are uniquely isoperimetric if $0 < m \leq k - k/(n+k-1)$, that is, if they are stable (and $m > 0$). The implicit unjustified assumption is that the generating curve is convex.

\end{abstract}

\section{\textbf {Introduction}}

The generalized Log-Convex Density Conjecture says that in $\mathbb{R}^n$ with smooth radially symmetric perimeter and volume densities, balls about the origin are isoperimetric if they are stable [Mo2]. In a recent preprint, Alvino et al. [ABCMP, Conj. 5.1] after Diaz et al. [DHHT, 2.3, 4.21, 4.22] and Carroll et al. [CJQW] feature the case of densities $r^m$ and $r^k$ (we'll assume $m$, $k > 0$), for which the stability condition is $m \leq k - k/(n+k-1)$ (see [Mo5, (1a)], which replaces $n$ by $n+1$). Sean Howe [Ho, Ex. 3.5(4)] had proved the result under the stronger hypothesis $m\leq k - 1$. Our Theorem 3.1 proves the conjecture for this choice of densities, assuming that the generating curve is convex (which is known when $m \leq k-1$). If that assumption could be removed, Theorem $3.1$, together with the $2D$ results of Alvino et al., would complete the proof of a conjecture by Diaz et al. [Di, Conj. 4.22(1)] and hence a following corollary [Di, Cor.4.24]. Alvino et al. [Al, Conj. 8.1 and following remarks] also explain how a related conjecture of Caldiroli and Musina [CM, p. 423] would follow.

\bigskip

\subsection{Acknowledgments}
This paper is the work of the “SMALL” 2016 Geometry Group, advised by Frank Morgan. We would like to thank the NSF, Williams College (including the Finnerty Fund), the MAA, and Stony Brook University for supporting the “SMALL” REU and our travel to MathFest.

\section{\textbf{Existence and Regularity }}

Theorems \ref{existence} and \ref{bound} guarantee the existence and boundedness of isoperimetric regions of all volumes.
\begin{figure}[h!]\label {figurr}

  \includegraphics[width=.9
\textwidth]{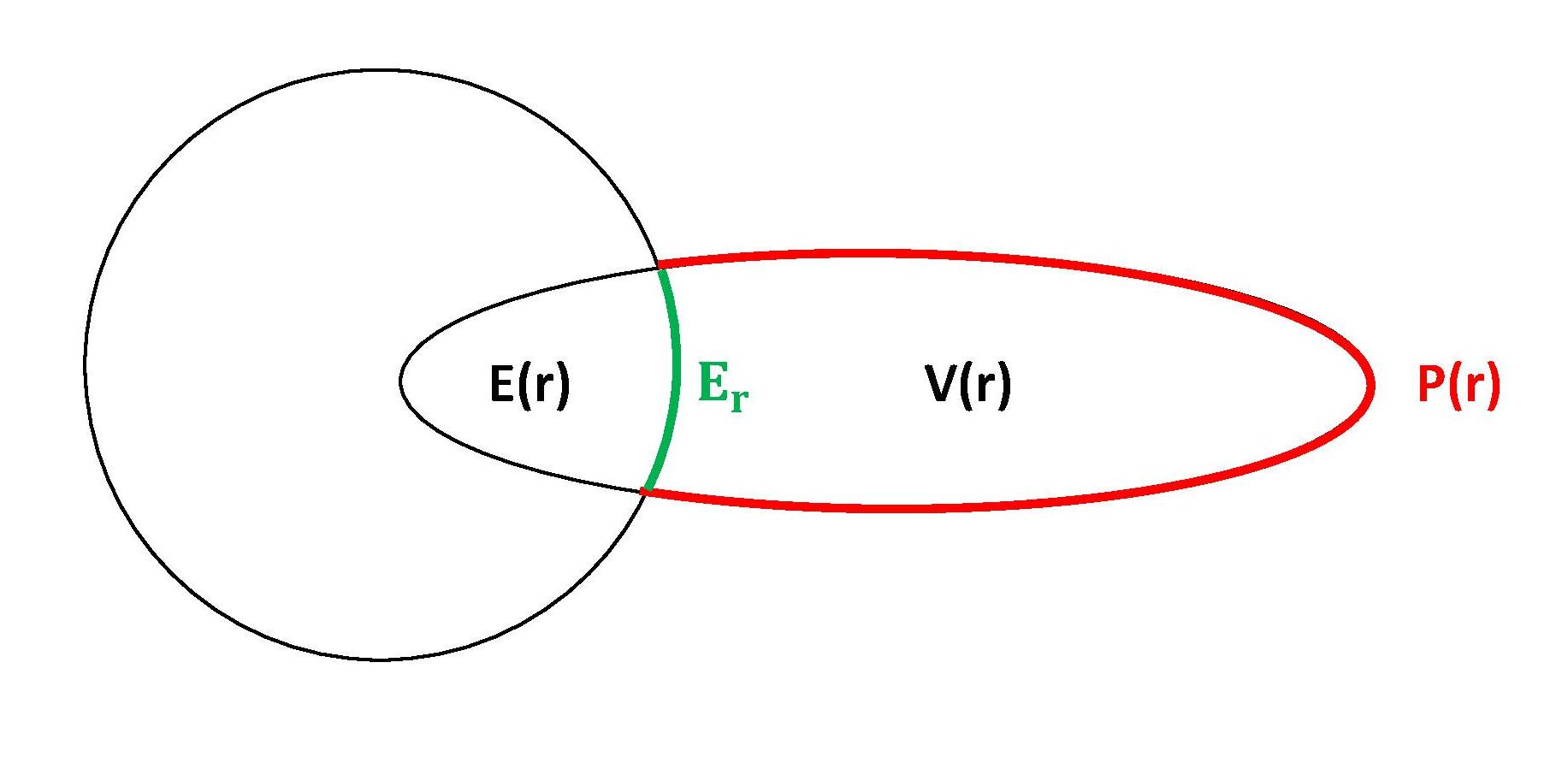}
\caption{By relating the slice $E_r$ of $E$ to the rate of growth of the volume $|E(r)|$ inside the unit sphere, one can obtain inequalities useful in proving existence and boundedness.}
\end{figure}
The following definitions will be used in Theorems \ref{existence} and \ref{bound} and Lemma \ref{projection}. 
\begin{notation}
Denote the sphere and ball of radius $r$ by $S(r)$ and $B(r)$. Let $|S|$ denote the unweighted measure of a surface or region $S$. For a region $E$, let $E_r$ be the slice of $E$ by the sphere $S(r)$ (see [Mo, 4.11]). Let $B_r$ be the restriction of the boundary of $E$ to the exterior of the ball $B(r)$ and define $P(r)$ as $|B_r|$. Let $E(r)$ denote the restriction of $E$ to the ball $B(r)$.
Furthermore, let $V(r)$ be the volume of the restriction of $E$ to the exterior of the ball $B(r)$. Let  $p(r)$ denote the perimeter of $E_r$. In the presence of any volume density $f$, apply $f$ as a subscript to indicate $f$ weighted volume. Similarly for any perimeter density $g$, apply $g$ as a subscript to indicate $g$ weighted perimeter. In particular, $P_g(r)$ denotes the weighted measure of $B_r$ and $P_g(0)$ denotes the weighted perimeter of E.

\end{notation}

\begin{lemma}\label{projection}
Let $E$ be a region in  $\mathbb{R}^n$ with continuous, nondecreasing, and radial volume density $f$. If $E$ contains finite weighted volume then for any $r$, $$P(r)\geq |E_r|.$$

\end{lemma}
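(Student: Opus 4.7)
The plan is to prove $P(r) \geq |E_r|$ by radial projection onto the sphere $S(r)$. Define
\[
\Phi \colon \{p \in \mathbb{R}^n : |p| \geq r\} \to S(r), \qquad \Phi(p) = rp/|p|.
\]
The two key claims are: (i) $E_r \subseteq \Phi(B_r)$ up to a set of $(n-1)$-dimensional measure zero, and (ii) $\Phi$ is area-nonincreasing on $(n-1)$-dimensional surfaces. Together these give $|E_r| \leq |\Phi(B_r)| \leq |B_r| = P(r)$.

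For (i), fix a direction $\omega \in S^{n-1}$ with $r\omega \in E$. If $f \equiv 0$ the lemma is vacuous, so I may assume $f(r_0) > 0$ for some $r_0 > 0$. Because $f$ is nondecreasing, $f \geq f(r_0)$ on $\{|p| \geq r_0\}$, and the finite weighted volume of $E$ forces finite unweighted volume of $E \cap \{|p| \geq r_0\}$. Fubini then yields that for almost every $\omega$ the one-dimensional measure of $\{r' \geq r_0 : r'\omega \in E\}$ is finite, so the outward ray through $r\omega$ must exit $E$ at some finite radius $r^*(\omega) \geq r$. The exit point $r^*(\omega)\omega$ lies in $\partial E \cap \{|p| \geq r\} = B_r$ and satisfies $\Phi(r^*(\omega)\omega) = r\omega \in E_r$.

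For (ii), at a point $p = r'\omega \in B_r$ the differential $D\Phi(p) = (r/r')(I - \omega\omega^T)$ restricts to the tangent plane $T_pB_r$ (with unit normal $\nu$) as a map onto $\omega^\perp$ with Jacobian $(r/r')^{n-1}|\nu\cdot\omega|$, which is at most $1$ since $r' \geq r$. The area formula then gives $|\Phi(B_r)| \leq |B_r|$, and combining with (i) completes the argument.

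The main obstacle is the measure-theoretic bookkeeping in (i): one must check that the exceptional set of $\omega \in E_r$ whose outward ray never leaves $E$ is $(n-1)$-null in $S(r)$, and that $B_r$ is $(n-1)$-rectifiable of locally finite measure so that the area formula applies. The former relies crucially on the nondecreasing hypothesis on $f$ through the Fubini argument above, while the latter is standard for the finite-perimeter regions under consideration.
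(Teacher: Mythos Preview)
Your proof is correct and follows exactly the paper's strategy: the radial projection onto $S(r)$ is area-nonincreasing, and its image covers $E_r$ up to a null set because any uncovered positive-measure subset of $E_r$ would spawn an infinite cone inside $E$ with infinite weighted volume. The paper states claim~(i) directly as this contrapositive rather than via your Fubini argument, and simply asserts claim~(ii) without the Jacobian computation, but the content is the same.
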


\begin{proof}
Since the radial projection $\pi_r$ of the exterior onto $S_r$ is area nonincreasing, it suffices to show that $\pi_r(B_r)$ covers $E_r$, up to a set of measure zero. If not, then a set of positive measure in $E_r$ is not in the projection. The  product of this set with  $(r, \infty)$ is contained in $E$. But this set must have infinite weighted volume, violating our assumption of finite weighted volume.
\end{proof}

\begin{theorem}\label{existence}

In $\mathbb{R}^n$ with nondecreasing, divergent, radial, continuous volume and perimeter densities $f(r)$ and $g(r)$, if $f(r)\leq cg(r)$ outside a bounded set, then for every given volume there exists a perimeter-minimizing region.

\end{theorem}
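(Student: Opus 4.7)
The plan is to apply the direct method of the calculus of variations. Take a minimizing sequence $\{E_i\}$ of regions with $V_f(E_i) = V_0$ and $P_g(E_i) \to I := \inf\{P_g(E) : V_f(E) = V_0\}$; the goal is to extract a limit $E$ satisfying $V_f(E) = V_0$ and $P_g(E) \leq I$. I would proceed in three standard steps: (a) local $\mathrm{BV}$-compactness producing a limit $E$; (b) lower semicontinuity of weighted perimeter; (c) preservation of weighted volume in the limit.

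For (a), on every annulus $A = B(R) \setminus B(r)$ with $0 < r < R$, the density $g$ is continuous and, by monotonicity and divergence, eventually bounded below by $g(r) > 0$. Hence the uniform weighted-perimeter bound on $E_i$ translates into a uniform unweighted-perimeter bound on $E_i \cap A$, while $f \geq f(r) > 0$ on $A$ supplies a uniform unweighted volume bound. Classical $\mathrm{BV}$-compactness produces a subsequence converging in $L^1(A)$; diagonalizing over $r \to 0$ and $R \to \infty$ yields a single $L^1_{\mathrm{loc}}$-limit $E$. Step (b) is the standard lower semicontinuity of weighted perimeter under $L^1_{\mathrm{loc}}$ convergence for continuous densities.

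The main obstacle is step (c), ruling out escape of weighted volume to infinity. The crucial input is Lemma \ref{projection} together with the monotonicity of $g$, which combine to give
\[
g(r)\,|E_{i,r}| \;\leq\; g(r)\,P_i(r) \;\leq\; P_{i,g}(r) \;\leq\; M
\]
uniformly in $i$ and $r$, where $M$ bounds the weighted perimeters $P_g(E_i)$. Paired with the hypothesis $f \leq cg$ for $r \geq R_0$, this controls the cross-sections at every radius. I would close the argument by a truncation: for each $i$, pick $R_i \to \infty$ so that $P_{i,g}(R_i) \to 0$ and $V_f(E_i \setminus B(R_i)) \to 0$ (both possible because each $E_i$ has finite total weighted perimeter and weighted volume). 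The truncation $E_i \cap B(R_i)$ has weighted perimeter
\[
P_g(E_i) - P_{i,g}(R_i) + g(R_i)\,|E_{i,R_i}| \;\leq\; P_g(E_i)
\]
by the displayed inequality, and it loses only $o(1)$ weighted volume. Restore the lost amount by attaching a small region disjoint from the truncation in a suitable low-density annulus: since divergence of $f$ forces $|E_i \setminus B(R')|$ to be bounded by $V_0/f(R')$ (which is small for $R'$ large), the complement of the truncation contains plenty of room for a small ball of weighted volume $v_i \to 0$, whose weighted perimeter is $o(1)$ by continuity of $g$. The resulting bounded, asymptotically minimizing sequence of exact weighted volume $V_0$ is then handled by $\mathrm{BV}$-compactness on a single fixed ball, giving the minimizer.

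The hardest technical piece is the volume-restoration step: one must verify that the attached region can be placed disjoint from the truncated sequence and carries only $o(1)$ weighted perimeter. This is exactly where the divergence of $f$ and the bound $f \leq cg$ together come into play.
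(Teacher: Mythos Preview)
Your steps (a) and (b) are fine, and the inequality $g(r)\,|E_{i,r}| \le P_{i,g}(r) \le M$ that you extract from Lemma~\ref{projection} is correct and is indeed used in the paper. The gap is in your step (c). You truncate $E_i$ at radii $R_i \to \infty$, restore the lost volume, and then assert that the resulting sequence is ``bounded'' and can be handled ``by $\mathrm{BV}$-compactness on a single fixed ball.'' But the truncated set $E_i \cap B(R_i)$ sits in $B(R_i)$, and $R_i \to \infty$; nothing you have written forces these sets into a common ball. You have produced another asymptotically minimizing sequence of the correct volume, but it is no more bounded than the original one, so the argument is circular. The point of the whole step is precisely to show that a \emph{fixed} $R$ captures all but $o(1)$ of the volume uniformly in $i$, and your truncation does not establish that.

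What is missing is a quantitative estimate that actually rules out escape. The paper argues by contradiction: suppose that for some $\epsilon>0$, for every $R$ and for a tail of the sequence, $V_f(E\setminus B(R))\ge \epsilon$. From your own inequality $P_g(0)\ge g(R)\,M$ with $M=\sup_{r\ge R}|E_r|$, the divergence of $g$ forces $M$ (hence every slice $|E_r|$) to be small. Now the essential tool you did not invoke enters: the isoperimetric inequality on the sphere, $p(r)\ge c_n|E_r|^{(n-2)/(n-1)}$. Integrating $-\partial P_g/\partial r \ge g(r)p(r)$ from $R$ to $\infty$ and using $f\le cg$ gives
\[
P_g(0)\;\ge\;\frac{c_n}{c}\int_R^\infty |E_r|^{(n-2)/(n-1)} f(r)\,dr\;\ge\;\frac{c}{M^{1/(n-1)}}\int_R^\infty |E_r| f(r)\,dr\;\ge\;\frac{c}{M^{1/(n-1)}}\,\epsilon,
\]
and combining with $P_g(0)\ge g(R)M$ yields $P_g(0)^{n/(n-1)}\ge c\,g(R)^{1/(n-1)}\epsilon$, contradicting the uniform perimeter bound as $R\to\infty$. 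Without the spherical isoperimetric inequality you have no link between the small slice areas $|E_r|$ and a lower bound on the perimeter carried outside $B(R)$, and that link is exactly what forces the volume to stay put.
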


\begin{proof}
Since the case $n=1$ is trivial, we may assume that $n \geq 2$.
Consider a sequence of regions of the prescribed volume with perimeter tending to the infimum. By compactness [Mo1, 9.1] we may assume convergence to a perimeter-minimizing region. The difficulty is that the enclosed volume may be strictly less than the prescribed volume, that some volume disappears to infinity. In that case we may assume that for some $\epsilon > 0$, for all $0<R$, for a tail of the sequence, the volume outside the ball of radius $R$ about the origin is at least $\epsilon$. Fix $R$ arbitrarily large and such a region $E$. Recall that $|E_r|$ is the unweighted area of the slice of E by an \textit{n}-sphere of radius $r$. By Morgan [Mo, \textsection4.11], for almost all $r$,

$$ |E_r|f(r) = |E_r|_f = -\frac{\partial V_f}{\partial r}.$$ Therefore

$$ \int_R^{\infty}cg(r)|E_r|\,dr  \geq \int_R^{\infty}f(r)|E_r|\,dr = V_f(R) \geq \epsilon.$$ 
Let $$ M = \sup \{|E_r|:r\geq R\}.$$
By Lemma \ref{projection} $$P(r) \geq |E_r|. $$ Hence
\begin{equation} \label{sock}
P_g(0) \geq P_g(r) \geq P(r)g(R) \geq Mg(R).
\end{equation}
Since $g(R)$ may be arbitrarily large, we may assume that $M$ and hence $|E_r|$ is small. Hence, by the standard isoperimetric inequality on $S^{n-1}$, for all $r>R$ $$p(r) \geq c_n |E_r| ^{\frac{n-2}{n-1}}$$ for a suitable dimensional constant $c_n$. Note that $P_g(r)$ is nonincreasing and hence differentiable almost everywhere. By Morgan [Mo, \textsection4.11], for almost all $r$,

$$-\frac{\partial P_g(r)}{\partial r} \geq g(r)p(r).$$ Hence,

$$P_g(0) \geq P_g(R) \geq \int_R^\infty -\frac{\partial P_g(r)}{\partial r}dr \geq \int_R^\infty p(r)g(r)dr \geq \frac{1}{c}\int_R^\infty  p(r)f(r)dr$$ $$ \geq \frac{c_n}{c} \int_R^\infty |E_r| ^{\frac{n-2}{n-1}}f(r)dr \geq \frac{c}{M^{\frac{1}{n-1}}}\int_R^\infty|E_r| f(r)dr\geq \frac{c}{M^{\frac{1}{n-1}}}\epsilon, $$ where $c$ may change from line to line. Note that the second inequality follows from the fact that $P_g(R)$ is a nonincreasing function so that any jump discontinuities are decreases. Combining the above with (\ref{sock}) we obtain $$P_g(0)^{\frac{n}{n-1}}\geq c g(R)^{\frac{1}{n-1}}\epsilon.$$ Since $P_g(0)^{\frac{n}{n-1}}$ is an element of a bounded sequence, it is bounded above; but $g(R)^{\frac{1}{n-1}}$ can be made arbitrarily large, a contradiction. Therefore there is no loss of volume to infinity and the limit provides the desired perimeter-minimizing region.

\end{proof}

\begin{theorem} \label{bound}
Consider continuous  radial nondecreasing perimeter and volume densities $g$ and $f$ in $\mathbb{R}^n$, with $g \slash f$ nondecreasing. Then isoperimetric sets are bounded. 
\end{theorem}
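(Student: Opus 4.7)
Plan. I would argue by contradiction: suppose the isoperimetric region $E$, of weighted volume $V$, is unbounded, so that $V_f(r)>0$ for every $r>0$ and $V_f(r)\to 0$ as $r\to\infty$ (using $|E|_f<\infty$). The cornerstone estimate, derived by combining Lemma~\ref{projection} with the monotonicity of $g$ (every point of $B_r$ has Euclidean norm at least $r$), is
\[
  P_g(r)\;\geq\;g(r)\,P(r)\;\geq\;g(r)\,|E_r|\;=\;|E_r|_g,
\]
and it must be strict for all sufficiently large $r$, since equality would force $B_r\subset S(r)$ and hence $E\setminus B(r)=\emptyset$.  Consequently the truncation $E\cap B(r)$ has weighted perimeter at most $P_g(0)-(P_g(r)-|E_r|_g)<P_g(0)$ and weighted volume $V-V_f(r)$.

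To produce a competitor of the correct weighted volume, I restore the missing volume with a small ball placed near the origin.  Fix once and for all a Euclidean ball $B_0\subset\mathbb{R}^n\setminus\overline{E}$; such a $B_0$ exists because $E$ is a perimeter minimizer of finite weighted volume, so $E^c$ has nonempty interior.  For $r$ large enough that $V_f(r)\le|B_0|_f$, I can choose a Euclidean ball $A_r\subset B_0$ with $|A_r|_f=V_f(r)$ and weighted perimeter at most $C\,V_f(r)^{(n-1)/n}$, where $C$ depends only on $f$, $g$, and $B_0$ via their continuity on $\overline{B_0}$.  The competitor $\tilde E=(E\cap B(r))\cup A_r$ then has weighted volume exactly $V$ and weighted perimeter at most
\[
  P_g(\tilde E)\;\le\;P_g(0)\;-\;\bigl(P_g(r)-|E_r|_g\bigr)\;+\;C\,V_f(r)^{(n-1)/n}.
\]

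The contradiction reduces to showing, for some large $r$,
\[
  P_g(r)-|E_r|_g\;>\;C\,V_f(r)^{(n-1)/n},
\]
and this is the main obstacle.  The hypothesis that $g/f$ is nondecreasing does the decisive work: it says that mass placed far from the origin is strictly more perimeter-expensive than mass placed near it, so an outer tail of $E$ can be swapped for an inner pocket at a net saving.  I would quantify this by combining the unweighted Euclidean isoperimetric inequality applied to $E\setminus B(r)$, $P(r)+|E_r|\ge c_n V(r)^{(n-1)/n}$, with the bounds $P_g(r)\ge g(r)P(r)$, $|E_r|_g=g(r)|E_r|$, and $V_f(r)\ge f(r)V(r)$ (from $f$ nondecreasing), to produce a lower bound of the form $P_g(r)-|E_r|_g\ge c\,g(r)V(r)^{(n-1)/n}$ valid at radii where the slice term $|E_r|_g$ is small compared to this leading contribution.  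The monotonicity of $g/f$ is then used to make the resulting prefactor dominate $C$ for large $r$, yielding the contradiction.  The technical heart of the argument is precisely this last step: promoting the qualitative strict inequality $P_g(r)>|E_r|_g$ into a quantitative one that beats $C\,V_f(r)^{(n-1)/n}$, while carefully selecting a cofinal sequence of radii on which the slicing error is controlled.
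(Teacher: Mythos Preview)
Your competitor construction is fine, but the ``technical heart'' you flag is a genuine gap, and the route you sketch does not close it. From the Euclidean isoperimetric inequality on $E\setminus B(r)$ you only control the \emph{sum} $P(r)+|E_r|$, hence $P_g(r)+|E_r|_g$, not the difference $P_g(r)-|E_r|_g$; you give no mechanism for producing a cofinal set of radii on which the slice is negligible relative to $g(r)V(r)^{(n-1)/n}$. More seriously, the inequality $V_f(r)\ge f(r)V(r)$ you invoke goes the wrong way: it bounds the unweighted tail $V(r)$ from \emph{above}, so one cannot pass from a lower bound in terms of $g(r)V(r)^{(n-1)/n}$ to one in terms of $V_f(r)^{(n-1)/n}$. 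Finally, the hypothesis is only that $g/f$ is nondecreasing, not divergent (indeed $g=f$ is permitted), so ``make the resulting prefactor dominate $C$ for large $r$'' is unavailable. With a ball competitor of cost $C\,V_f(r)^{(n-1)/n}$, the potential savings and the restoration cost are of the same order, and nothing in your outline separates them.

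The paper handles both issues differently. First, instead of the Euclidean inequality on the tail, it slices by spheres and combines the spherical isoperimetric inequality $p(r)\ge c_n|E_r|^{(n-2)/(n-1)}$ with $-P_g'(r)\ge p_g(r)$ and $|E_r|_g\le P_g(r)$ to obtain a differential inequality which, integrated and using that $g/f$ is nondecreasing, yields directly $P_g(r)\ge c_n\,(g/f)(r)\,V_f(r)^{(n-1)/n}$---a bound already in the weighted volume $V_f$, not the unweighted $V$. Second, instead of a fixed ball, it restores the missing volume $\epsilon=V_f(r)$ by exploiting the constant generalized mean curvature $H$ of $\partial E$, at perimeter cost $O(\epsilon)$, which is $o(\epsilon^{(n-1)/n})$. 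The comparison then does not require the slice to be small; rather it forces $|E_r|_f\ge c_n\,V_f(r)^{(n-1)/n}$ for all large $r$, i.e.\ $-\partial_r V_f(r)^{1/n}\ge c_n$, which drives $V_f$ to zero at a finite radius and gives the contradiction.
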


\begin{proof}
Let $E$ be an isoperimetric set and assume that it is not bounded. 
By Lemma \ref{projection} 
\begin{equation} \label{ineq1}
P_g(r) \geq |E_r|_g.
\end{equation}
The standard isoperimetric inequality on the sphere tells us that for any $r$ for which $|E_r|$ is at most half of the area of $S(r)$, one has
\begin{equation}\label{ineq2}
p(r)\geq c_n|E_r|^{\frac{n-2}{n-1}},
\end{equation} where $c_n$ is some dimensional constant. In turn, if E has bounded perimeter by Lemma \ref{sock}, 
\begin{equation}\label{ineq3}
|E_r| \leq \frac{1}{2}|S_r|
\end{equation} for all r big enough. Recalling that in $E_r$ the density has the constant perimeter value $g(r)$, inequality (\ref{ineq2}) is equivalent to

\begin{equation}\label{ineq4}
p_g(r)\geq c_n|E_r|_g^{\frac{n-2}{n-1}}g(r)^{\frac{1}{n-1}} .
\end{equation}
Noting that $f$ is bounded below and rewriting $g=gf\slash f$, we combine the lower bound of $f(r)^{\frac{1}{n-1}}$ with the constant $c_n$. Together with (\ref{ineq1}) we obtain
\begin{equation}\label{ineq5}
\begin{split}
p_g(r)\geq c_n\left(\frac{g}{f}(r)\right)^{\frac{1}{n-1}}|E_r|_g^{\frac{n-2}{n-1}} &= c_n\left(\frac{g}{f}(r)\right)^{\frac{1}{n-1}}|E_r|_g^{\frac{-1}{n-1}}|E_r|_g \\ &\geq c_n(\frac{g}{f}(r))^{\frac{1}{n-1}}|E_r|_g P_g(r)^{\frac{-1}{n-1}},
\end{split}
\end{equation} where $c_n$ may change from line to line. Note that we can take a negative power of $P_g(r)$ since we assumed it is never zero. Moreover, note that we took negative powers of $|E_r|_g$, which may be zero if $E$ is disconnected. However, for such values of $r$, every quantity in the inequality is zero and so the statements hold trivially.
Because $V(r)$, $P(r)$, and $g$ are monotonic functions, $V_f(r)$ and $P_g(r)$ are differentiable almost everywhere. By Morgan [Mo1, \textsection 4.11] 
\begin{equation}
-\frac{\partial P_g(r)}{\partial r} = \bigg|\frac{\partial P_g(r)}{\partial r}\bigg| \geq p_g(r)
\end{equation}
and  
\begin{equation}
-\frac{g}{f}(r)\frac{\partial V_f(r)}{\partial r} = \frac{g}{f}(r)|E_r|_f = |E_r|_g = -\frac{\partial V_g(r)}{\partial r}
\end{equation}
almost everywhere.
Hence we can rewrite (\ref{ineq5}) as

\begin{equation}
\begin{split}
-\frac{\partial P_g(r)}{\partial r} &\geq c_n\left(\frac{g}{f}(r)\right)^{\frac{1}{n-1}}|E_r|_g P_g(r)^{\frac{-1}{n-1}} =c_n\left(\frac{g}{f}(r)\right)^{\frac{n}{n-1}}|E_r|_f P_g(r)^{\frac{-1}{n-1}} \\ &= -c_n\frac{\partial V_f(r)}{\partial r}\left(\frac{g}{f}(r)\right)^{\frac{n}{n-1}} {P_{g}(r)}^{\frac{-1}{n-1}}. 
\end{split}
\end{equation}
Therefore

\begin{equation}
-\frac{\partial P_g(r)^{\frac{n}{n-1}}}{\partial r} \geq  -c_n\frac{\partial V_f(r)}{\partial r}\left(\frac{g}{f}\right)^{\frac{n}{n-1}}.
\end{equation}
Note that $P_g(r)$ may be discontinuous on a set of measure zero, but it is a monotonic function which converges to zero at infinity. Hence, 
$$ P_g(r)^{\frac{n}{n-1}} \geq \int_r^\infty -\frac{\partial}{\partial r}P_g(r)^{\frac{n}{n-1}}dr.$$ 
Since $V_f(r)$ is a continuous function that converges to zero at infinity, $V_f(r) = \int_r^\infty-\partial V_f(r)\slash \partial r.$
Since $g\slash f$ is a nondecreasing function, integration over $r$
yields
\begin{equation}
P_g(r)^{\frac{n}{n-1}} \geq \int_r^\infty -\frac{\partial}{\partial r}P_g(r)^{\frac{n}{n-1}}dr \geq c_n \left(\frac{g}{f}(r)\right)^{\frac{n}{n-1}}V_f(r).
\end{equation}
Pick a large constant $r_0 \in \mathbb{R}$ such that in particular the restriction $E(r_0) \neq \emptyset$. Let $H$ denote the unnormalized generalized mean curvature, equal to $ dP_g \slash dV_f$.  As such, it is possible to choose a set $E_{\epsilon}$, for $0< \epsilon < \tilde \epsilon $, so that $E_{\epsilon}$  agrees with $E$ outside a ball of radius $r_0$ and whose $f$-weighted volume exceeds that of $E$ by $\epsilon$. Then
$$P_g(E_{\epsilon}) \leq P_g(E) + \epsilon (H(E)+1).$$
Since $V_f(r)$ is continuous and converges to zero at infinity, we can choose $r>r_0$, so that $\epsilon = V_f(r) < \tilde \epsilon $. Let $\bar{E}$ be the restriction of $E_{\epsilon}$ to $B(r)$, then of course the $f$-weighted volume of $\bar{E}$ agrees with that of $E$, and moreover

\begin{equation}
\begin{split}
P_g(\bar E)&= P_g(E_{\epsilon})-P_g(r)+|E_r|_g \\ &\leq  P_g(E) + \epsilon (\frac{1}{n} H(E)+1)-c_n\frac{g}{f}(r)V_f(r)^{\frac{n-1}{n}} + |E_r|_g. \\&=  P_g(E) + \epsilon (\frac{1}{n} H(E)+1)-c_n\frac{g}{f}(r)\epsilon^{\frac{n-1}{n}} + \frac{g}{f}(r)|E_r|_f.
\end{split}
\end{equation}
Since $E$ is isoperimetric, $P_g(\bar E) - P_g( E) \geq 0$, hence
\begin{equation}
0 \leq \epsilon (\frac{1}{n} H(E)+1)-c_n\frac{g}{f}(r)\epsilon^{\frac{n-1}{n}} + \frac{g}{f}(r)|E_r|_f.
\end{equation}
Since we can choose $\epsilon$ arbitrarily small, and $\epsilon$ goes to $0$ much faster than $\epsilon ^{\frac{n-1}{n}}$  we deduce
\begin{equation}
 |E_r|_f \geq c_n\epsilon ^{\frac{n-1}{n}}. 
\end{equation}
That is,
\begin{equation}
 -\frac{\partial V_f(r)}{\partial r} \geq c_nV_f(r)^{\frac{n-1}{n}}
\end{equation}
or equivalently
\begin{equation}
-\frac{\partial V_f(r)^{\frac{1}{n}}}{\partial r} \geq c_n,
\end{equation}
which is a contradiction since we assumed $V(r) > 0$ for all $r$.

\end{proof}

\begin{remark}\label{SER}
Isoperimetric boundaries in a smooth \textit{n}D Riemannian manifold with smooth positive densities are smooth except for a singular set of dimension at most $n-8$ ([Mo4,  Cor. 3.8, Rmk. 3.10], which does not really require volume and perimeter densities $f$ and $g$ equal). Equilibrium implies constant generalized mean curvature

\begin{equation}\label{eq:gencur}
 H_{f,g} = \frac{g}{f} H + \frac{1}{f}\frac{\partial g}{\partial n}
\end{equation}[Mo5]. Here H is the standard inward unnormalized mean curvature, and n is the outward normal. Balls about the origin are stable (have nonnegative second variation) if and only if

\begin{equation}\label{stability}
nr(f/g)(g/f)' + r^2 g''/g - r^2 (1/fg)(f'g') \ge 0.
\end{equation}  
[Mo5]. In particular, for volume and perimeter densities $f = r^m$ and $g = r^k$, balls about the origin are stable if and only if
\begin{equation}\label{stabilitymk}
m \leq k - \frac{k}{k+n-1}
\end{equation}
[Mo5, (1a)].

\end{remark}

\section{\textbf{Isoperimetric Regions}}

Our main Theorem \ref{extension} proves the conjecture of Alvino et al. [Al, Conj. 5.1] that balls are isoperimetric if stable (Rmk. \ref{SER}), under the assumption that a generating curve is convex (Chambers [Ch, Prop. 4.1(1)], which follows from an argument of Morgan and Pratelli that works only if $m \leq k-1$). The proof consists of requisite extensions of the lemmas of Chambers [Ch].

\begin{theorem}\label{extension}
In $\mathbb{R}^n$ with volume density $r^m$ and perimeter density $r^k$ $(m, k > 0)$, balls about the origin are isoperimetric if they have nonnegative second variation, i.e., if $m \leq k - k/(n+k-1)$, assuming that a generating curve is convex.
\end{theorem}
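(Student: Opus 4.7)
The plan is to follow Chambers' strategy [Ch] for the log-convex density conjecture with equal densities, adapting each step to accommodate the asymmetry between the volume density $r^m$ and perimeter density $r^k$. First, Theorem \ref{existence} yields an isoperimetric region of every prescribed volume, since $f = r^m$ and $g = r^k$ are continuous, nondecreasing, radial, divergent, and satisfy $f \leq g$ outside a bounded set whenever $k \geq m$, which the stability hypothesis $m \leq k - k/(n+k-1)$ certainly implies. Theorem \ref{bound} then gives boundedness of any such minimizer, using that $g/f = r^{k-m}$ is nondecreasing. By Remark \ref{SER} the minimizer is smooth away from a singular set of Hausdorff dimension at most $n-8$ and has constant generalized mean curvature $H_{f,g}$ given by (\ref{eq:gencur}).

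Next, I would apply spherical symmetrization as in Chambers to reduce the ambient problem to the study of a planar generating curve. Since both densities are radial, symmetrizing each spherical slice of the isoperimetric region about a fixed axis through the origin preserves the weighted volume and does not increase the weighted perimeter. The resulting axially symmetric minimizer is then described by a generating curve in a meridian half-plane, and the isoperimetric problem transforms into a weighted planar problem whose induced volume and perimeter densities combine the ambient densities $r^m$, $r^k$ with the $(n-2)$-dimensional measure of the spherical slice.

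Then, lemma by lemma, I would extend Chambers' analysis of this planar generating curve, now carrying the exponents $m$ and $k$ separately throughout. The goal is to show that the generating curve must be a circular arc centered at the origin, which forces the minimizer to be a ball about the origin. This requires rewriting each of Chambers' geometric and analytic inputs---the formula for the generalized curvature from Remark \ref{SER}, the first-order ODE satisfied along the generating curve, and the various monotonicity and comparison estimates ruling out stray arcs---with the two-parameter densities in place of a single density. The stability condition (\ref{stabilitymk}) should enter as the precise threshold making Chambers' key comparison tight.

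The main obstacle is therefore not conceptual but technical: each of Chambers' lemmas contains a delicate estimate whose validity relied on $f = g$, and one must recheck whether the same estimate survives with independent exponents $m$ and $k$. I expect the hardest step to be the analog of Chambers' central lemma ruling out non-spherical generating curves, where the stability hypothesis must be invoked precisely; any weaker bound on $m$ (beyond the sharp threshold) would fail to close the inequality, which is exactly why the theorem is sharp.
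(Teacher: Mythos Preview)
Your proposal is correct and matches the paper's approach essentially step for step: existence via Theorem~\ref{existence}, boundedness via Theorem~\ref{bound}, regularity via Remark~\ref{SER}, spherical symmetrization, and then a lemma-by-lemma extension of Chambers' planar analysis with the stability condition $m \le k - k/(n+k-1)$ entering as the sharp threshold. The paper is slightly more specific in that it isolates exactly which six of Chambers' lemmas (his 3.4, 3.5, 3.7, 3.14, 5.1, 5.2) actually depend on the equal-density hypothesis and therefore require new proofs, with the remainder going through verbatim; you would discover the same list upon carrying out the program.
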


\begin{proof}
Given existence Theorem \ref{existence}, boundedness Theorem \ref{bound}, and regularity Remark \ref{SER}  most of the proof follows Chambers [Ch] word for word. He has just six lemmas which use his assumption that the volume and perimeter densities are equal and log convex: his Lemmas 3.4, 3.5, 3.7. 3.14. 5.1, and 5.2. Our Lemmas 3.5-3.10 will complement and provide the requisite versions of his lemmas.
\end{proof}
\begin{definition}
Let $n$ denote the outward normal. Let $H$ denote the mean curvature, by Chambers' convention the standard inward unnormalized mean curvature. Then the generalized mean curvature (Rmk. \ref{SER}) is given by

	$$H_f,g = H_0 + H_1 = \frac{g}{f} H + \frac{1}{f}\frac{\partial g}{\partial n}.$$
Given a circle of radius $r>0$, centered at the point $(a,0)$, and parameterized by arclength s, let

	$$R(s) = \abs{(a + r \cos \frac{s}{r}, r \sin\frac{s}{r})}.$$
Chambers defines $B_R(f)$ as the centered ball on which the density equals its value at the origin. For our increasing densities, $B_R(f)$ is empty.

\bigskip 
 
An isoperimetric surface of revolution is generated by a planar curve $\gamma$ parametrized by arclength s, with curvature $\kappa(s)$. Following Chambers [Ch, Defn. 3.1], let $C_s$ be the oriented circle centered on the $e_1$ axis that agrees with $\gamma$ to first order at $\gamma(s)$. Let $A_s$ be the oriented circle tangent to  $\gamma$ at $s$, whose signed curvature is equal to $\kappa(s)$. \\

\end{definition}
Our Lemmas \ref{poscurv}, \ref{H''0} and \ref{(-1,0)}, generalizing Chambers [Ch, Lemmas 3.4, 3.5, 3.7] require the following Lemmas \ref{range}  - \ref{H1''}.

\begin{lemma} \label{range}
Fix an integer $n>1$. For $m,k >0$, 

$$(k-m)(n-1)\frac{1}{r} + \frac{k(k-m-2)(r+a\cos\frac{s}{r})}{R(s)^2} + \frac{k}{r} \geq 0 $$   $$\textit{for all }  r>0, a \geq 0, \textit{and } s \in [0,\pi r \slash 2)$$ if and only if $$0<m\leq k-k\slash (k+n-1).$$
\end{lemma}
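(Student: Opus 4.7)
The plan is to clear denominators and observe that the resulting expression collapses to a quadratic in $a$ whose three coefficients are all manifestly nonnegative precisely under the stability hypothesis.

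Set $t = \cos(s/r) \in (0,1]$ and abbreviate $A = (k-m)(n-1)+k$ and $B = k(k-m-2)$. Since $R^2 > 0$ and $r > 0$, the desired inequality is equivalent (after multiplication by $rR^2$) to
$$Q(a) \;:=\; A\,R^2 + B\,r(r + a t) \;\geq\; 0.$$
Using $R^2 = a^2 + 2art + r^2$ and expanding, the cross terms combine cleanly to give
$$Q(a) \;=\; A\,a^2 \;+\; (2A+B)\,rt\cdot a \;+\; (A+B)\,r^2,$$
a quadratic in $a$ with $r$ and $t$ as nonnegative parameters.

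The key algebraic observation is that $2A+B$ and $A+B$ both factor nicely:
$$2A + B \;=\; (k-m)(k+2n-2), \qquad A + B \;=\; (k-m)(n+k-1) - k.$$
The stability condition $m \leq k - k/(n+k-1)$ is precisely $A+B \geq 0$. Under this condition one automatically has $k - m \geq k/(n+k-1) > 0$, so $A = (k-m)(n-1) + k > 0$ and $2A+B = (k-m)(k+2n-2) > 0$ as well. Since $a \geq 0$, $r > 0$, and $t \in (0,1]$, all three terms of $Q(a)$ are nonnegative, giving the inequality.

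For the converse, specialize to $a = 0$ and $s = 0$ (so $t=1$); then the original left-hand side equals $(A+B)/r$, which is negative whenever $A + B < 0$, i.e., whenever stability fails. There is really no obstacle to overcome here: the whole content is the identity $Q(a) = Aa^2 + (2A+B)rt\,a + (A+B)r^2$ together with the factorizations above, which show that the worst case of the inequality (realized at $a = s = 0$) is exactly the stability threshold.
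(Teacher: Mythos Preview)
Your argument is correct. The paper takes a slightly different route: it rearranges the inequality to isolate $k-m$ on the left, obtaining
\[
k-m \;\geq\; 2 - \frac{(2n-2+k)R^{2}}{R^{2}(n-1)+k r(r+a\cos(s/r))},
\]
and then asserts that the right-hand side is maximized at $a=0$, where it equals $k/(k+n-1)$. Your approach instead clears denominators and expands the result as the quadratic $Q(a)=Aa^{2}+(2A+B)rt\,a+(A+B)r^{2}$, after which the factorizations $2A+B=(k-m)(k+2n-2)$ and $A+B=(k-m)(k+n-1)-k$ make nonnegativity of all three coefficients immediate under stability. The upshot is the same---the extremal case is $a=0$---but your version makes this explicit rather than leaving the maximization step to the reader, and it avoids any worry about the sign of the divisor when isolating $k-m$. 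Either approach is short; yours is a bit more self-contained.
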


\begin{proof}
The inequality holds if and only if

\begin{align*}
k-m &\geq \frac{2\frac{k(r+acos\frac{s}{r})}{R(s)^2} -  \frac{k}{r}}{\frac{n-1}{r} +\frac{k(r+acos\frac{s}{r})}{R(s)^2}}\\
&= 2 - \frac{(2n-2+k)R(s)^2}{R(s)^2 (n-1)+k(r^2+\arccos\frac{s}{r})}.
\end{align*}
Since the expression on the right is maximal when $a=0$, we get the equivalent inequality: $$k-m \geq \frac{k}{n-1+k}.$$ 

\end{proof}

\begin{lemma}\label{H1''}
Fix an integer $n>1$. For $m,k >0$,

$$\frac{a}{r(a+r)^{2}}(n-1)(k-m) +\frac{ak}{(a+r)^{2}r^2}+\frac{ak}{(a+r)^{3}r}(k-m-2) \geq 0 $$
$$\textit{for all }  r>0, a \geq 0, \epsilon \geq 0, \textit{and } s=0$$ if and only if $$0<m\leq k-k\slash (k+n-1).$$
\end{lemma}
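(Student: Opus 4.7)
Following the template of Lemma \ref{range}, the plan is to clear denominators so that the stated inequality becomes a polynomial condition that is linear in $k-m$, and then to isolate $k-m$. Since every term on the left-hand side carries an explicit factor of $a$, the $a=0$ case is trivial; so I would assume $a>0$ and multiply through by the smallest positive common denominator divided by $a$. Expanding and regrouping, I expect the resulting polynomial to separate into a piece that is manifestly non-negative (e.g.\ a positive multiple of $a$) and a remainder whose non-negativity is equivalent to $(k-m)(k+n-1) \ge k$.

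Since that latter inequality is exactly $m \le k - k/(k+n-1)$, matching the stability condition \eqref{stabilitymk}, one direction of the biconditional follows immediately. For the converse, the infimum of the reduced polynomial over $a\ge 0$ with $r>0$ fixed is attained in the limit $a\to 0^{+}$, so any violation of the stability bound would already be visible there, completing the ``only if'' direction.

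I do not anticipate a deep obstacle; the heart of the matter is algebraic bookkeeping. The main subtlety is verifying that after expansion the cross-terms in $r$ and $a$ organize themselves so that the residual dependence on $r$ collapses into a single factor of the form $r\bigl[(k-m)(k+n-1)-k\bigr]$, with no leftover coupling between $r$ and $a$ that could obstruct the reduction to a one-parameter inequality. Once this collapse is confirmed, the biconditional follows exactly as in Lemma \ref{range}.
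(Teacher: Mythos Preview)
Your proposal is correct and follows essentially the same route as the paper: after dividing out the common factor $a$ and clearing denominators, the expression is linear in $k-m$, and the extremal case occurs at $a\to 0^{+}$, yielding exactly $k-m\ge k/(k+n-1)$. The paper phrases this as isolating $k-m$ and observing that the resulting right-hand side is maximized at $a=0$, which is precisely your polynomial decomposition viewed from the other side; your anticipated splitting into $a\bigl[(n-1)(k-m)+k\bigr]+r\bigl[(k-m)(k+n-1)-k\bigr]$ does occur, with no residual $r$--$a$ coupling.
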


\begin{proof}

The inequality holds if and only if
$$k-m \geq \frac{-\frac{k}{r} + \frac{2k}{a+r}}{\frac{n-1}{r}+\frac{k}{a+r}}= 2 - \frac{k+2n -2}{k(\frac{r}{a+r})+n-1}.$$ 
Furthermore, since the expression on the right is maximal when $a=0$, we get the equivalent inequality: $$k-m \geq \frac{k}{n-1+k}.$$ 
\end{proof}

\begin{lemma}[{cf. [Ch, Lemmas 5.1 and 5.2 ]}] \label{H1}
Consider a circle of radius $r>0$ centered at the point $(a,0), a \geq 0$. Consider a counterclockwise parametrization by arclength

$$\alpha(s)=(a+r\cos{\frac{s}{r}}, r\sin{\frac{s}{r}}),$$ where $s \in [0,2\pi r)$. Then for any $s \in [-\pi r\slash 2, \pi r\slash 2]$ $$ H_1'(s)= -\frac{ak}{r}\sin\frac{s}{r}R(s)^{k-m-2}, $$ $$- a k(k-m-2)R(s)^{k-m-4}(r + a\cos\frac{s}{r})\sin\frac{s}{r}$$  
and $$\frac{d}{ds} R(s)^{k-m} = -R(s)^{k-m-2}\sin \frac{s}{r}. $$ Moreover, 
$$H_1''(0)= -\frac{ak}{r^2}(a+r)^{k-m-2}-\frac{ak}{r}(k-m-2)(r+a)^3$$ and  

$$ \frac{d^2}{ds^2} R(s)^{k-m}(0) = -\frac{1}{r}(a+r)^{k-m-2}. $$
\end{lemma}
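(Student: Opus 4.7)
The plan is a direct computation carried out in three ingredients. First, from the parametrization one obtains
$$R(s)^2 = (a + r\cos(s/r))^2 + (r\sin(s/r))^2 = a^2 + 2ar\cos(s/r) + r^2,$$
so differentiating in $s$ yields $R(s)R'(s) = -a\sin(s/r)$, hence $R'(s) = -(a/R)\sin(s/r)$. By the chain rule,
$$\frac{d}{ds}R(s)^p = -p\,a\,R(s)^{p-2}\sin(s/r)$$
for any exponent $p$, which directly gives the asserted formula for $\tfrac{d}{ds}R^{k-m}$.

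Second, to put $H_1$ in workable form I would identify the outward unit normal as $n(s) = (\cos(s/r), \sin(s/r))$, since $\alpha$ parametrizes the circle about $(a,0)$ counterclockwise. For $g = R^k$ one has $\nabla g = kR^{k-2}(x,y)$, so along the curve a short expansion gives
$$\nabla g\cdot n = kR^{k-2}\bigl[(a + r\cos(s/r))\cos(s/r) + r\sin^2(s/r)\bigr] = kR^{k-2}\bigl(r + a\cos(s/r)\bigr).$$
Dividing by $f = R^m$ then produces the closed form
$$H_1(s) = k R(s)^{k-m-2}\bigl(r + a\cos(s/r)\bigr).$$

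Third, the formula for $H_1'(s)$ follows from the product and chain rules applied to this expression, combining the derivative of $R^{k-m-2}$ above with $\tfrac{d}{ds}\cos(s/r) = -(1/r)\sin(s/r)$; both resulting summands carry an overall factor of $\sin(s/r)$, matching the stated formula. For the values at $s=0$, I would use the key observation that each summand of $H_1'(s)$ and of $\tfrac{d}{ds}R^{k-m}$ has the form $\sin(s/r)$ times a smooth factor, so differentiating once more and setting $s=0$ kills every term except the one where $\sin(s/r)$ itself is differentiated. This contributes a single factor of $1/r$ from $\tfrac{d}{ds}\sin(s/r)|_{s=0}$, multiplied by the smooth factor evaluated at $s=0$ where $R(0) = a+r$. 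The calculation is entirely mechanical and no substantial obstacle is expected; the only care needed is in tracking the exponents $k-m-2$ and $k-m-4$ alongside the $(k-m-2)$ constant produced when differentiating the power of $R$.
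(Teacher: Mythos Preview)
Your approach is essentially identical to the paper's: both derive the closed form $H_1(s)=kR^{k-m-2}(r+a\cos(s/r))$ by dotting the radial gradient of $g$ with the outward normal $n(s)=(\cos(s/r),\sin(s/r))$ and dividing by $f$, and then differentiate directly using $R'(s)=-(a/R)\sin(s/r)$. Your write-up is in fact a bit more explicit than the paper's (which calls the second-derivative steps ``straightforward computations''), and your observation that every first-derivative term carries a $\sin(s/r)$ factor is exactly the shortcut that makes the evaluation at $s=0$ immediate.
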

\begin{proof}
For any $s \in [0,\pi r\slash 2)$ $\gamma(s) \neq (0,0)$, hence $$H_1=\frac{g'(R(s))(N(s)\cdot n(s))}{ f(R(s))}.$$
By direct calculation

$$N=\frac{(a+r\cos\frac{s}{r}, r\sin\frac{s}{r})}{R(s)}, $$ $$ n(s)=(cos\frac{s}{r}, sin\frac{s}{r}),$$
$$N(s)\cdot n(s) = \frac{r + a\cos(\frac{s}{r})}{R(s)},$$

$$H_1= \frac{g'(R(s))}{R(s)f(R(s))}(r + a\cos\frac{s}{r}),$$
and
$$ H_1'(s)= -\frac{ak}{r}\sin\frac{s}{r}R(s)^{k-m-2} $$ $$- a k(k-m-2)R(s)^{k-m-4}(r + a\cos\frac{s}{r})\sin\frac{s}{r}.$$ 
The facts that $$\frac{d}{ds} R(s)^{k-m} = -\frac{a}{r}R(s)^{k-m-2}\sin \frac{s}{r},$$ 
$$H_1''(0)= -\frac{ak}{r^2}(a+r)^{k-m-2}-\frac{ak}{r}(k-m-2)(r+a)^3$$ and  

$$ \frac{d^2}{ds^2} R(s)^{k-m}(0) = -\frac{a}{r}(a+r)^{k-m-2},$$ follow from  straightforward computations.\\

\end{proof}
Lemmas \ref{poscurv}, \ref{H''0} and \ref{(-1,0)}, are the analogs of Chambers [Ch, Lemmas $3.4$, $3.5$, $3.7$]
\begin{lemma}[{cf. [Ch, Lemma 3.4]}] \label{poscurv}
Given a point $s \in [0,\beta)$, $\kappa'(s)\geq 0$ if the following properties hold:
\begin{enumerate}
\item $\gamma'(s)$ is in the second quadrant;
\item $\kappa(s)=\kappa(C_s) > 0$.
\end{enumerate}
If in addition $\gamma'(s) \neq (0,1)$ and $C_s$ is not centered at the origin, then $\kappa'(s) >0$.

\end{lemma}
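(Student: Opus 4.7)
The plan is to exploit that $\gamma$ has constant generalized mean curvature $H_{f,g}$ along its length, and to compare this, at $\gamma(s)$, with the value on the sphere obtained by rotating $C_s$ about the $e_1$-axis. Because $C_s$ agrees with $\gamma$ to first order at $\gamma(s)$, the position, the outward normal, and hence the radial direction from the origin and $H_1$, all coincide there; the $H_0$ contributions therefore differ only through the intrinsic curvature. I would record this as the identity
\[
H_{f,g}(\gamma(s)) \;=\; \mathcal{H}(s) + \frac{g}{f}(R(s))\bigl(\kappa(s)-\kappa(C_s)\bigr),
\]
where $\mathcal{H}(s):=H_{f,g}(C_s)$ is evaluated at the moving point $\gamma(s)$.

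Next, I would differentiate in $s$, use that $H_{f,g}(\gamma)$ is constant, and evaluate at $s_0$ where $\kappa(s_0)=\kappa(C_{s_0})$ by hypothesis (2), obtaining
\[
\kappa'(s_0) \;=\; \frac{d}{ds}\kappa(C_s)\bigg|_{s_0} - \frac{f}{g}(R_0)\,\mathcal{H}'(s_0).
\]
Writing $\kappa(C_s)=1/r(s)=-\cos\theta(s)/x_2(s)$ (valid since hypothesis (1) forces $\cos\theta<0$) and using $\theta'=\kappa$, $x_2'=\sin\theta$, a short computation shows that at $s_0$ all three of $\frac{d}{ds}\kappa(C_s)$, $r'(s)$, and $a'(s)$ vanish, where $a(s)=x_1(s)+x_2(s)\tan\theta(s)$ is the $x$-coordinate of the center of $C_s$; intuitively, $\gamma$ agrees with $C_{s_0}$ not merely to first but to second order at $\gamma(s_0)$. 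Consequently $\mathcal{H}'(s_0)$ reduces to $\frac{d}{dt}H_{f,g}(C_{s_0})|_{t_0}$, the derivative taken along the fixed circle $C_{s_0}$ at $\gamma(s_0)$, where $t_0\in[0,\pi r/2)$ is the arclength parameter of $\gamma(s_0)$ on $C_{s_0}$.

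Finally, I would compute this last derivative directly from Lemma \ref{H1} together with $H_0(C_{s_0})=(n-1)R^{k-m}/r$, obtaining
\[
\mathcal{H}'(s_0) = -a\sin(t_0/r)\,R^{k-m-2}\left[\frac{(k-m)(n-1)}{r} + \frac{k(k-m-2)(r+a\cos(t_0/r))}{R^2} + \frac{k}{r}\right].
\]
The bracket is precisely the expression of Lemma \ref{range}; under the stability hypothesis $m\leq k-k/(n+k-1)$, that lemma makes it nonnegative, so $\mathcal{H}'(s_0)\leq 0$ and $\kappa'(s_0)\geq 0$. The proof of Lemma \ref{range} further shows that the relevant ratio is strictly maximized at $a=0$, so the bracket is strictly positive whenever $a>0$; the additional hypotheses that $C_s$ is not centered at the origin and that $\gamma'(s)\neq(0,1)$, which respectively force $a>0$ and $\sin(t_0/r)>0$, then upgrade the conclusion to $\kappa'(s_0)>0$. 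The hardest part will be keeping the sign conventions for $\kappa$, $\theta$, and the outward normal straight, and assembling the algebra so that the bracket of Lemma \ref{range} emerges cleanly; once that is done, the conclusion is immediate.
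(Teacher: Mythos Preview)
Your proposal is correct and takes essentially the same approach as the paper: both use that hypothesis (2) forces the osculating circle $A_s$ to coincide with $C_s$, so $\gamma$ agrees with $C_s$ to second order at $\gamma(s)$, whence the derivative of the constant generalized mean curvature along $\gamma$ can be computed along the fixed circle $C_s$ via Lemma~\ref{H1}, yielding exactly the bracket of Lemma~\ref{range}. Your decomposition $H_{f,g}=\mathcal H + (g/f)(\kappa-\kappa(C_s))$ and the explicit verification that $r'(s_0)=a'(s_0)=\tfrac{d}{ds}\kappa(C_s)\big|_{s_0}=0$ make the second-order contact more transparent than the paper's terser ``$A_s$ approximates $\gamma$ up to third order,'' but the substance and the final formula for $\kappa'(s)$ are identical.
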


\begin{proof}
By $(2)$, the circle $A_s$ is simply $C_s$. Since $\kappa(s)>0$ the circle has finite positive radius. As such, $A_s$ approximates $\gamma$ up to  third order at $s$,  hence 

$$\kappa'(C_s)= \kappa'(C_{\tilde s}) = 0,$$

$$H_1'(s)=H_1(\tilde s),$$
and $$ \frac{d}{ds}\frac{g(s)}{f( s)} =\frac{d}{d\tilde s} \frac{g(\tilde s)}{f(\tilde s)}.$$
Since $\kappa(s)=\kappa(C_s)$, by the above equations, $$\frac{d}{ds}H_0(s)=\frac{d}{d\tilde s}\frac{g(\tilde s)}{f( \tilde s)}(n-1)\kappa(s)+ \frac{g(s)}{f( s)}\kappa'(s).$$ 
By Lemma \ref{H1} 
$$H_{f,g}' = -a(k-m)(n-1)\kappa(s)R(s)^{k-m-2}\sin\frac{s}{r} $$ $$+ \kappa'(s)R(s)^{k-m} -\frac{a}{r}\sin\frac{s}{r}R(s)^{k-m-2} $$ $$- a(k-m-2)R(s)^{k-m-4}(r + a\cos\frac{s}{r})\sin\frac{s}{r}.$$ 
Note that $H_{f,g}' =0$, and $\kappa(s)=\kappa(C_s) =\kappa(C_{\tilde s})=1/r.$ Hence,  $$\kappa'(s)= a\sin\frac{s}{r}R(s)^{-2}[(k-m)(n-1)\frac{1}{r} $$ $$+ \frac{k(k-m-2)(r+a\cos\frac{s}{r})}{R(s)^2}+ \frac{k}{r}].$$ 
Since $$a\sin(s\slash r)R(s)^{-3} \geq 0$$ for $s \in [0 \pi r \slash 2)$, and $$m \leq k-k\slash (k+n-1)$$ by Lemma \ref{range}, $\kappa'(s) \geq 0$. 

If $\gamma'(s) \neq (0,1)$ and $C_s$ is not centered at the origin, then $a>0$, hence
 $$ k-m > \frac{2\frac{k(r+acos\frac{s}{r})}{R(s)^2} -  \frac{k}{r}}{\frac{n-1}{r} +\frac{k(r+acos\frac{s}{r})}{R(s)^2}}. $$ Since $\gamma'(s) \neq (0,1),$ $\sin (s\slash r)>0$, and it follows that $\kappa'(s)>0.$

\end{proof}

\begin{lemma}[{cf. [Ch, Lemma 3.5]}]\label{H''0} 
If $\gamma$ is not a centered circle, then $\kappa''(0)=0$.
\end{lemma}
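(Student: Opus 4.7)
The plan is to exploit the Euler--Lagrange condition that the generalized mean curvature $H_{f,g}$ is constant along $\gamma$, which forces $H_{f,g}''(0)=0$, and then isolate the coefficient of $\kappa''(0)$ in this identity. Writing
\begin{equation*}
H_{f,g}(s) \;=\; \frac{g(R(s))}{f(R(s))}\,H(s) \;+\; H_1(s),
\end{equation*}
with $H(s)$ the standard unnormalized mean curvature of the surface of revolution generated by $\gamma(s)=(x(s),y(s))$, the goal is to express $H_{f,g}''(0)$ as $(g/f)(R(0))\,\kappa''(0)$ plus a remainder that vanishes.

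First, I would exploit the symmetry at $s=0$ to kill all first-derivative cross terms. The parametrization is taken so that at $s=0$ the tangent $\gamma'(0)$ is perpendicular to the axis of revolution, so $R'(0) = \gamma(0)\cdot\gamma'(0)/R(0) = 0$, whence $\bigl[(g/f)(R)\bigr]'(0) = 0$. Lemma \ref{H1} gives $H_1'(0)=0$ because $H_1'(s)$ carries the factor $\sin(s/r)$, and Lemma \ref{poscurv}, applied to both sides of $s=0$ by the reflection symmetry of the setup, yields $\kappa'(0)=0$. Consequently
\begin{equation*}
0 = H_{f,g}''(0) = \frac{g(R(0))}{f(R(0))}\bigl(\kappa''(0) + H_{\mathrm{rot}}''(0)\bigr) + \bigl[(g/f)(R)\bigr]''(0)\,H(0) + H_1''(0),
\end{equation*}
where $H_{\mathrm{rot}}$ denotes the rotational contribution to $H$.

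Next, I would compute the three non-$\kappa''(0)$ terms explicitly. From the arclength relations $x'(0)=0,\ y'(0)=1,\ x''(0)=-\kappa(0),\ y''(0)=0$, the term $H_{\mathrm{rot}}''(0)$ depends only on $\kappa(0)$, $y(0)$, and $n$. Substituting Lemma \ref{H1}'s explicit values of $H_1''(0)$ and $(d^2/ds^2)R^{k-m}(0)$, and using the identifications $\kappa(0)=\kappa(C_0)=1/r$ and $y(0)=a+r$ furnished by the comparison circle $C_0$ of Lemma \ref{H1}, the three terms collect into a positive multiple of the left-hand side of the inequality in Lemma \ref{H1''}. Because the same computation applied \emph{along} $C_0$ itself (on which $\kappa \equiv 1/r$, so $\kappa'' \equiv 0$) produces identically the same remainder, and this remainder vanishes at the equilibrium value picked out by the stability threshold, we conclude $(g/f)(R(0))\,\kappa''(0)=0$, and positivity of $g/f$ forces $\kappa''(0)=0$.

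The main obstacle is the algebraic identification in the second paragraph: verifying that the three remainder terms assemble into exactly the Lemma \ref{H1''} expression requires careful bookkeeping of $(a+r)$-powers, the $(k-m-2)$ and $(k-m-4)$ exponents, and the dimensional factor $n-2$ arising from $H_{\mathrm{rot}}$. This is the analog of Chambers's central cancellation for the equal-density case, where the identity is simpler because $H_1\equiv 0$. The hypothesis that $\gamma$ is not a centered circle enters to ensure $a>0$, making $C_0$ a nondegenerate comparison.
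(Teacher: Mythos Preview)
Your setup---differentiating the constant $H_{f,g}$ twice at $s=0$, using the osculating circle $C_0=A_0$ to evaluate the non-$\kappa''(0)$ contributions via Lemma~\ref{H1}, and then appealing to Lemma~\ref{H1''}---is exactly the paper's route. The gap is in your final paragraph, and it originates in taking the printed statement literally: the ``$=0$'' is a typo for ``$>0$'' (compare Chambers~[Ch, Lemma~3.5], which this lemma generalizes, and note that the paper's own proof ends with a strict inequality).

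Your claim that the remainder vanishes is incorrect. You argue that ``the same computation applied along $C_0$ itself produces identically the same remainder,'' which is fine, and then that ``this remainder vanishes at the equilibrium value picked out by the stability threshold.'' It does not. Along $C_0$ the generalized mean curvature $H_{f,g}$ is \emph{not} constant: $C_0$ is an off-center circle (you yourself invoke $a>0$), not a critical hypersurface for the weighted problem, so there is no Euler--Lagrange reason for $H_{f,g}''(0)$ along $C_0$ to vanish. The entire content of Lemma~\ref{H1''} is precisely that, under the stability hypothesis $m\le k-k/(k+n-1)$ and with $a>0$, this remainder has a strict sign. Feeding that into
\[
0=H_{f,g}''(0)=\frac{g(R(0))}{f(R(0))}\,\kappa''(0)+\text{remainder}
\]
forces $\kappa''(0)>0$, which is what is actually needed downstream to launch Chambers' curvature-monotonicity argument; the conclusion $\kappa''(0)=0$ would be useless there.
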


\begin{proof}

For $s=0$, $C_0=A_0$, and the circle approximates $\gamma$ up to fourth order. As such, we have that 
$$\kappa''(C_s)= \kappa''(C_{\tilde s}),$$ 
$$H_1''(0)= H_1''(\tilde 0),$$ and 
$$\frac{d}{ds}\frac{g(s)}{f(s)}= \frac{d}{d \tilde s}\frac{g(\tilde s)}{f(\tilde s)}.$$ 
Observe that $C_0$ is not a centered circle, as $\kappa(0)=\kappa(C_0)$, and so by Chambers [Ch, Lemma 3.2] $\gamma$ would be a centered circle. Since we assumed that this is not the case, $C_0$ is not centered. Since $\kappa''(C_s)= \kappa''(C_{\tilde s}),$ $C_0$ approximates $\gamma$ up to fourth order, 
by Lemma \ref{H1}  we obtain
$$H_{f,g}''(0)= \kappa ''(0)R(s)^{k-m} - \frac{a}{r}(k-m)(n-1)\kappa(s)(a+r)^{k-m-2} $$ $$-\frac{ak}{r^2}R(s)^{k-m-2}-\frac{ak}{r}{k-m-2}(a+r)^3.$$
Since $a>0$, by Lemma \ref{H1''} $H''_{f,g} > 0$. 
\end{proof}

\begin{lemma} [{cf. [Ch, Lemma 3.7]}] \label{(-1,0)}
Let $s \in [0,\beta)$. If $\gamma '(s) = (-1,0)$, $\gamma_1(s)>0$, and $\kappa(s) \geq \kappa(C_s) > 0$, then $\kappa'(s) > 0$.
\end{lemma}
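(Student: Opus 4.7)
The plan is to apply the equilibrium condition $H_{f,g}'(\gamma)(s)=0$ directly at the given point and solve for $\kappa'(s)$, in the spirit of Lemma~\ref{poscurv}. Write $(a,r):=\gamma(s)$ with $a>0$ and $r>0$, and set $R:=\sqrt{a^{2}+r^{2}}$. Because $\gamma'(s)=(-1,0)$, arclength forces $\gamma_{1}''(s)=0$ and $\gamma_{2}'(s)=0$, so the mean curvature $H=\kappa-(n-2)\gamma_{1}'/\gamma_{2}$ of the hypersurface of revolution satisfies $H(s)=\kappa(s)+(n-2)/r$ and $H'(s)=\kappa'(s)$: the rotational part of $H'$ vanishes at this point, isolating $\kappa'(s)$.

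I would next compute the remaining ingredients at $s$. Using the outward normal $n(s)=(0,1)$, the relations $\gamma\cdot n=r$, $\gamma'\cdot n=0$, and $n'(s)=\kappa(s)\gamma'(s)$, a direct calculation (paralleling Lemma~\ref{H1}, but with the comparison circle's curvature $1/r$ replaced by $\kappa(s)$) gives $(g/f)(R)'\big|_{s}=-a(k-m)R^{k-m-2}$ and
$H_{1}'(s)=-ak\,R^{k-m-4}\bigl[(k-m-2)r+\kappa(s)R^{2}\bigr]$.
Substituting these into $0=[(g/f)(R)\,H]'(s)+H_{1}'(s)$ and solving for $\kappa'(s)$ yields
\[
\kappa'(s)=\frac{a}{R^{4}}\Bigl\{R^{2}\bigl[(2k-m)\kappa(s)+(k-m)(n-2)/r\bigr]+k(k-m-2)\,r\Bigr\}.
\]

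Finally I would verify positivity. Using $\kappa(s)\geq 1/r$ and $2k-m>0$, the brace is bounded below by
\[
\bigl\{R^{2}[kn-m(n-1)]+kr^{2}(k-m-2)\bigr\}/r,
\]
which is $R^{2}$ times the content of Lemma~\ref{range} evaluated at the endpoint $s=\pi r/2$ (so $\sin(s/r)=1$ and $\cos(s/r)=0$). The stability hypothesis $m\leq k-k/(k+n-1)$ makes this nonnegative; moreover, the proof of Lemma~\ref{range} shows that the inequality there is tight only at $a=0$, so for the positive $a$ of our hypothesis the lower bound is strictly positive, giving $\kappa'(s)>0$. The main obstacle is the careful bookkeeping that isolates $\kappa'(s)$ in the equilibrium equation and recognizes the resulting positivity condition as an instance of Lemma~\ref{range}; once this is done, the conclusion is mechanical.
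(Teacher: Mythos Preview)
Your proposal is correct and follows essentially the same route as the paper: differentiate the equilibrium identity $H_{f,g}'=0$, isolate $\kappa'(s)$, and reduce positivity to the inequality of Lemma~\ref{range} with $a>0$ (hence strict). The only cosmetic difference is that the paper routes the derivative computations through the osculating circle $A_s$ in Chambers' style, whereas you compute $H_0'$ and $H_1'$ directly at $\gamma(s)=(a,r)$; your packaging is in fact a bit cleaner, and your use of $\kappa(s)\ge \kappa(C_s)=1/r$ to reach exactly the $s=\pi r/2$ instance of Lemma~\ref{range} makes explicit a step the paper leaves implicit.
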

\begin{proof}
Since $\gamma$ is regular at $s$, the circle $A_s$ is defined and approximates $\gamma$ up to the third order at $s$. As such, $$\kappa'(C_s)= \kappa'(C_{\tilde s}).$$ 
Note that on $A_s$ $\kappa (C_{\tilde s})$ has a critical point at $\tilde s_0$  when  the tangent to $A_s$ at $\tilde s_0$ is $(\pm 1, 0)$.
Since $\gamma '(s) = (-1,0)$, we have that  $0=\kappa'(C_{\tilde s})=\kappa'(C_s)$. Since $A_s$ approximates $\gamma$ to third order, 
$$H_1'(s)=\tilde H_1(\tilde s).$$
Since $\gamma_1(s) >0$, the center of $C_s$ is greater than zero. Again, since $A_s$ approximates $\gamma$ to third order, 
$$ \frac{d}{ds}\frac{g(s)}{f( s)} =\frac{d}{d\tilde s} \frac{g(\tilde s)}{f(\tilde s)}.$$
By Lemma \ref{H1} 
$$H_{f,g}' = -a(k-m)((n-2)\kappa(C_s)+\kappa(s))R(s)^{k-m-2}\sin\frac{s}{r} $$ $$+ \kappa'(s)R(s)^{k-m} -\frac{ak}{r}\sin\frac{s}{r}R(s)^{k-m-2} $$ $$- a k(k-m-2)R(s)^{k-m-4}(r + a\cos\frac{s}{r})\sin\frac{s}{r}.$$ 
Since $H_{f,g}'=0$, 
$$\kappa'(s)= a\sin\frac{s}{r}R(s)^{-2}[(k-m)((n-2)\kappa(C_s)+\kappa(s)) $$ $$+ \frac{k(k-m-2)(r+a\cos\frac{s}{r})}{R(s)^2}+ \frac{k}{r}].$$ 
Since $a>0$,
 $$ k-m > \frac{2\frac{k(r+acos\frac{s}{r})}{R(s)^2} -  \frac{k}{r}}{\frac{n-1}{r} +\frac{k(r+acos\frac{s}{r})}{R(s)^2}}. $$ 
Since $\gamma'(s)=(-1,0),$ $ \sin (s\slash r)>0$, and it follows that $\kappa'(s)>0.$
\end{proof}

Lemmas \ref{3.14C} is the analog of Chambers [Ch, Lemma 3.14].

\begin{lemma}[{cf. [Ch, Lemma 3.14]}] \label{3.14C} 
If $\gamma$ is not a centered circle, then we have that $\nu < \beta$, and $\nu \in L$.
\end{lemma}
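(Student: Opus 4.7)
The plan is to transcribe the proof of Chambers' Lemma 3.14, substituting our Lemmas \ref{poscurv}, \ref{H''0}, and \ref{(-1,0)} for his Lemmas 3.4, 3.5, and 3.7 at every citation. This is consistent with the authors' declared strategy that, with these three generalizations in hand, the remainder of the argument proceeds word-for-word; no new density-specific estimates should be needed at the level of Lemma 3.14 once the three replacement lemmas are available.

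Concretely, the argument is by contradiction, assuming that either $\nu = \beta$ or $\nu \notin L$. The key mechanism is that at the candidate parameter $s = \nu$, the tangent direction $\gamma'(\nu)$ and the curvature comparison between $\kappa(\nu)$ and $\kappa(C_\nu)$ land in one of two geometric configurations. In the first, $\gamma'(\nu)$ lies in the second quadrant with $\kappa(\nu) = \kappa(C_\nu) > 0$, and our Lemma \ref{poscurv} delivers $\kappa'(\nu) \geq 0$, strict provided $C_\nu$ is not centered at the origin and $\gamma'(\nu) \neq (0,1)$. In the second, $\gamma'(\nu) = (-1, 0)$ with $\gamma_1(\nu) > 0$ and $\kappa(\nu) \geq \kappa(C_\nu) > 0$, and our Lemma \ref{(-1,0)} delivers $\kappa'(\nu) > 0$. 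If the candidate point degenerates to $s = 0$, our Lemma \ref{H''0} supplies $\kappa''(0) = 0$ whenever $\gamma$ is not a centered circle, which combined with Chambers' unchanged Lemma 3.2 forbids the degenerate outcome. In each scenario, the sign conclusion contradicts the maximality built into the definition of $\nu$, forcing both $\nu < \beta$ and $\nu \in L$.

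The main obstacle I anticipate is verifying the non-degeneracy conditions required for the strict inequalities in Lemmas \ref{poscurv} and \ref{(-1,0)}: we need $a > 0$ at $s = \nu$, i.e., $C_\nu$ not centered at the origin, and, in the first configuration, $\gamma'(\nu) \neq (0, 1)$. In Chambers these reductions rest on his Lemma 3.2, whose statement concerns curvature and tangent data alone and is density-free, so the argument should transfer intact to the two-density setting $(f,g) = (r^m, r^k)$. Once this verification is in place, the remainder is a bookkeeping exercise of matching each step of Chambers' prose to the correspondingly numbered analog among our Lemmas \ref{poscurv}, \ref{H''0}, \ref{(-1,0)}.
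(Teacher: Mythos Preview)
There is a genuine gap. Your plan treats Chambers' Lemma 3.14 as if its density dependence enters only through citations of his Lemmas 3.4, 3.5, 3.7, so that swapping in our Lemmas \ref{poscurv}, \ref{H''0}, \ref{(-1,0)} would suffice. But the paper's own accounting (proof of Theorem \ref{extension}) lists Chambers' Lemma 3.14 \emph{itself} among the six lemmas that use the equal, log-convex density hypothesis. In particular, Chambers' argument for 3.14 contains a direct curvature comparison --- showing $\kappa(z) < \kappa(\overline{z})$ for small positive $\overline{z}$ --- which he obtains from the constancy of generalized mean curvature together with the specific form that $H_{f,g}$ takes when $f=g$. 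That step does not reduce to an invocation of 3.4, 3.5, or 3.7, and it does not survive verbatim when $f=r^m$ and $g=r^k$ are distinct.

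The paper handles this by inserting a new auxiliary result, Lemma \ref{curvaturez}, which rewrites $H_{f,g}(u)$ in the two-density setting as
\[
H_{f,g}(u) = r^{k-m}\bigl(\kappa(u)+(n-2)\kappa(C_u)\bigr) + k\,r^{k-m-1}\,N(\gamma(u))\cdot \gamma'(u)^{\perp},
\]
and uses monotonicity in each slot to deduce $\kappa(z) < \kappa(\overline{z})$ from the hypotheses $|\gamma(z)|\le|\gamma(\overline{z})|$, $N\cdot\gamma'^{\perp}$ increasing, and $\kappa(C_z)\le\kappa(C_{\overline{z}})$. Your outline, which only tracks behavior at the single parameter $s=\nu$, never produces this two-point comparison, so the contradiction you describe would not close. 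To repair the argument you need to identify this step in Chambers and supply Lemma \ref{curvaturez} (or an equivalent) at that point; the rest of your substitution plan is then fine.
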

\begin{proof}
The proof proceeds as in Chambers [Ch, Lemma 3.14] except that we use Lemma \ref{curvaturez} to show that $\kappa (z) < \kappa(\overline{z})$  for small positive $\overline{z}$. 
\end{proof}

\begin{lemma} \label{curvaturez} 
Suppose $z, \overline{z} $ satisfy the following:
\begin{enumerate}

\item  $\abs{\gamma(z)} \leq \abs{\gamma(\overline{z})},$
\item  $N(\gamma(z) \cdot \gamma'(z)^{\perp} <N(\gamma(\overline{z}) \cdot, \gamma'(\overline{z})^{\perp} ,$
\item $\kappa(C_z) \leq  \kappa (C_{\overline{z}}).$
\end{enumerate} 
Then $\kappa(z) <\kappa (C_{\overline{z}}).$
\end{lemma}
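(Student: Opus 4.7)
The plan is to exploit the constancy of the generalized mean curvature $H_{f,g}$ along the isoperimetric generating curve $\gamma$. For densities $f=r^m$ and $g=r^k$, using the surface-of-revolution decomposition $H(s) = \kappa(s) + (n-2)\kappa(C_s)$, one has the explicit formula
$$H_{f,g}(s) = |\gamma(s)|^{k-m}\bigl[\kappa(s)+(n-2)\kappa(C_s)\bigr] + k\,|\gamma(s)|^{k-m-1}\bigl(N(\gamma(s))\cdot n(s)\bigr),$$
where $N$ is the outward unit radial field and $n$ is the unit normal to $\gamma$. Applying this identity at both $z$ and $\overline{z}$, I would set $H_{f,g}(z)=H_{f,g}(\overline{z})$ to obtain the key relation between the two points.

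Writing $r=|\gamma(z)|$, $\overline{r}=|\gamma(\overline{z})|$, I rearrange this equality to isolate the quantity $r^{k-m}\bigl(\kappa(z)-\kappa(C_{\overline{z}})\bigr)$, producing three ``defect'' contributions on the other side, one for each hypothesis. Hypothesis (1) controls the $r^{k-m}$ prefactor: since $k-m>0$ under the stability assumption, one has $r^{k-m}\le \overline{r}^{k-m}$ and similarly for the $r^{k-m-1}$ prefactor. Hypothesis (3) gives $\kappa(C_z)\le\kappa(C_{\overline{z}})$, controlling the $(n-2)\kappa(C_s)$ contribution. Finally, hypothesis (2) supplies the only strict inequality, $N(z)\cdot n(z)<N(\overline{z})\cdot n(\overline{z})$, which is paired with the strictly positive coefficient $k\,r^{k-m-1}$ and thus produces the strictness in the eventual conclusion.

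The main obstacle is that the rearranged identity still contains $\kappa(\overline{z})$ itself, which is not directly bounded by the hypotheses. To deal with this, I would follow Chambers' strategy for his [Ch, Lemma 3.14]: reduce the problem to a comparison with the tangent circle at $\overline{z}$, so that $\gamma$ is effectively replaced near $\overline{z}$ by a circle for which $\kappa=\kappa(C)$ holds by definition (this is analogous to the ``$C_0 = A_0$'' reduction used in Lemma \ref{H''0}, and parallels the role played by the osculating-circle calculation in Lemmas \ref{poscurv} and \ref{H''0}). After this substitution, the combined defect contribution becomes a weighted combination of the coefficients $r^{k-m}$ and $k\,r^{k-m-1}$, whose sign under hypothesis (1)--(3) is precisely the algebraic inequality established in Lemma \ref{range}. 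Since Lemma \ref{range} is equivalent to the stability bound $m\le k-k/(n+k-1)$, which is the running assumption, the total defect is forced to be strictly negative, yielding $\kappa(z)<\kappa(C_{\overline{z}})$.
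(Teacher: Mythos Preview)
Your first two paragraphs are the paper's entire proof: it records the formula
\[
H_{f,g}(u)=|\gamma(u)|^{k-m}\bigl(\kappa(u)+(n-2)\kappa(C_u)\bigr)+k\,|\gamma(u)|^{k-m-1}\,N(\gamma(u))\cdot\gamma'(u)^{\perp},
\]
sets $H_{f,g}(z)=H_{f,g}(\overline z)$, and says the conclusion follows directly from reading off how each summand moves under hypotheses (1)--(3), with (2) supplying the strict inequality. No further reduction is carried out.

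Your third paragraph is where you diverge, and the added machinery does not work. The osculating/tangent-circle substitutions in Lemmas \ref{poscurv}, \ref{H''0}, \ref{(-1,0)} are \emph{local} devices for computing \emph{derivatives} of $H_{f,g}$ at a single point $s$: the approximating circle agrees with $\gamma$ to the required order at $s$, so quantities like $H_1'(s)$ or $(g/f)'(s)$ may be computed along the circle instead. In the present lemma you are comparing the \emph{values} of $H_{f,g}$ at two distinct points $z,\overline z$ on $\gamma$; replacing $\gamma$ near $\overline z$ by some circle changes $\kappa(\overline z)$ and hence $H_{f,g}(\overline z)$, destroying the identity $H_{f,g}(z)=H_{f,g}(\overline z)$ on which everything rests. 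In particular, substituting $C_{\overline z}$ so that ``$\kappa=\kappa(C)$ by definition'' alters the very term $\kappa(\overline z)$ you were trying to eliminate. Similarly, Lemma \ref{range} is the algebraic inequality governing the sign of $\kappa'$ along a circle; it is a derivative statement arising after differentiating $H_{f,g}=\text{const}$, and there is no analogous ``weighted combination of $r^{k-m}$ and $kr^{k-m-1}$'' produced by the pointwise comparison at $z$ and $\overline z$ for it to control. The paper does not invoke it here.
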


\begin{proof}
The Lemma follows directly from the fact that $H_{f,g}(z) = H_{f,g}(\overline z)$ and that we can express  $H_{f,g}(u)$ as $$H_{f,g}(u) = r^{k-m}(\kappa(u)+(n-2)\kappa(C_u)) +kr^{k-m-1}N(\gamma(u) \cdot \gamma'(u)^{\perp}$$
\end{proof}

\newpage

 \bibliographystyle{alpha}

\begin{thebibliography}{10}

\bibitem{Al}[ABCMP] A. Alvino, F. Brock, F. Chiacchio,  A. Mercaldo, and  M.R. Posteraro. Some isoperimetric inequalities on $\mathbb{R} ^N$ with respect to weights. \textit{J. Math. Anal. Appl.} 451 (2017), 280–318.

\bibitem {A} [Ch] Gregory R. Chambers. Proof of the log convex density conjecture. \textit{J. Eur. Math. Soc.}, to appear.

\bibitem{}[CJQW] Colin Carroll, Adam Jacob, Connor Quinn, and Robin Walters. The isoperimetric problem on planes with density. \textit{Bull. Aust. Math. Soc.} 78 (2008), 177-197.

\bibitem{}[CM] Paolo Caldiroli and Roberta Musina. Symmetry breaking of extremals for the Caffarelli-Kohn-Nirenberg inequalities in a non-Hilbertian setting. \textit{Milan J. Math.} 81 (2013), 421-430.

\bibitem{Di}[DHHT] Alexander Diaz, Nate Harman, Sean Howe, and David Thompson. Isoperimetric problems in sectors with density. \textit{Adv. Geom} .{12}  (2012), 589-619.

\bibitem{}[Ho] Sean Howe. The Log-Convex Density Conjecture and vertical surface area in warped products. \textit{Adv. Geom.} 15 (2015), 455-468.

\bibitem {M} [Mo1] Frank Morgan, \textit{Geometric Measure Theory: A Beginner's Guide}. Academic Press, 2016.

\bibitem{}[Mo2] 
Frank Morgan, The Log Convex Density Conjecture. \url{http://sites.williams.edu/Morgan/2010/04/03/the-log-convex-density-conjecture/}.

\bibitem{}[Mo3] Frank Morgan. Manifolds with Density. \url{http://sites.williams.edu/Morgan/2010/03/15/manifolds-with-density/}; Manifolds with Density: Fuller References. \url{http://sites.williams.edu/Morgan/2010/03/16/manifolds-with-density-fuller-references/}.

\bibitem{MF3} [Mo4] Frank Morgan. Regularity of isoperimetric hypersurfaces in Riemannian manifolds. \textit{Trans. Amer. Math. Soc.} 355 (2003), 5041-5052.


\bibitem{MF} [Mo5] Frank Morgan, Variation Formulae for Perimeter and Volume Densities. \url{http://sites.williams.edu/Morgan/2010/06/22/variation-formulae-for-perimeter-and-volume-densities/}







\end{thebibliography}

\bigskip
\newpage

Lewis \& Clark College 

\url{digiosia@lclark.edu}

\vspace{.5cm}

Williams College 

\url{jih1@williams.edu }

\vspace{.5cm}

Stony Brook University

\url{lea.kenigsberg@stonybrook.edu }

\vspace{.5cm}

Williams College

\url{dsp1@williams.edu }

\vspace{.5cm}

Williams College 

\url{wz1@williams.edu }

\end{document}